\documentclass[reqno]{amsart}
\usepackage[left=1in,right=1in,top=1in,bottom=1in]{geometry}
\usepackage{tikz}
\usetikzlibrary{shapes,decorations,calc,arrows}
\usepackage[foot]{amsaddr}
\usepackage{amsthm}
\usepackage{amscd}
\usepackage{amsfonts}
\usepackage{amsmath}
\usepackage{amssymb}
\usepackage{mathrsfs}
\usepackage{multirow}
\usepackage{verbatim}
\usepackage{url}
\usepackage[hidelinks]{hyperref}
\usepackage{graphicx}
\usepackage{cite}
\usepackage{fancyhdr}
\usepackage{yfonts}
\usepackage{setspace}
\usepackage{titlesec}
\usepackage{enumitem}

\usepackage{ dsfont }

\usepackage{tocloft} 

\titleformat{\section}[hang]
{\normalfont\Large\bfseries}
{\thesection.}{0.5em}{}

\titlespacing*{\section}{0pc}{2pc}{0.25pc}

\titleformat{\subsection}[runin]
{\normalfont\large\bfseries}
{\thesubsection}{0.5em}{}

\titlespacing{\subsection}{0pc}{1.5pc}{0.5pc}

\newcommand{\N}{\mathbb{N}}
\newcommand{\Z}{\mathbb{Z}}

\newcommand{\R}{\mathbb{R}}

\renewcommand{\l}{\ell}

\newtheorem{thm}{Theorem}[section]

\newtheorem{prop}[thm]{Proposition}
\newtheorem{lem}[thm]{Lemma}
\newtheorem*{lem*}{Lemma}
\newtheorem{cor}[thm]{Corollary}

\theoremstyle{definition}
\newtheorem{defi}[thm]{Definition}
\newtheorem{ex}[thm]{Example}

\title{\textbf{Translation Actions on Non-Unimodular Groups and Strong Ergodicity}}
\author{FEHMİ EKİN GİRİTLİOĞLU}
\address{Department of Mathematics, Michigan State University\hfill \url{giritlio@msu.edu}}
\date{}

\begin{document}

\begin{abstract}
We investigate translation actions of countable dense subgroups of non-unimodular locally compact second countable (lcsc) groups. 
Using left-right actions, we show that the left translation action $\Gamma \curvearrowright G$ given by a countable dense subgroup $\Gamma$ of a locally compact second countable group $G$ can only be strongly ergodic if $G$ is almost unimodular. We show that the strong ergodicity of the action $\Gamma \curvearrowright G$ for an almost unimodular lcsc group $G$ is equivalent to the strong ergodicity of $\Gamma \cap \ker(\Delta_G)\curvearrowright \ker(\Delta_G)$, where $\Delta_G$ is the modular function. We demonstrate the absence of rigidity, by showing that non-isomorphic lcsc almost unimodular groups can admit orbit equivalent translation actions.
\end{abstract}

\maketitle


\section*{Introduction}

Defined first for actions of countable groups on a measure space \cite{Sc80}, a non-singular countable measurable equivalence relation $\mathcal{R}$ on a standard probability space $(X,\mu)$ is called \textit{strongly ergodic} if the relation does not contain any non-trivial almost invariant sequences. An almost invariant sequence is a sequence of measurable subsets $A_n$ such that for all $g \in [\mathcal{R}]$ (the full group of the equivalence relation), $\mu(A_n \Delta gA_n) \rightarrow 0$. A sequence $A_n$ is called trivial if $\mu(A_n)(1-\mu(A_n)) \rightarrow 0$. Although not a complete match, strong ergodicity is closely tied to fullness of von Neumann algebras. Houdayer-Marrakchi-Verraedt have shown that Connes' structure theorem for almost periodic weights on full von Neumann algebras \cite[Lemma 4.8]{Co74} has an analogue for almost periodic measures on strongly ergodic equivalence relations \cite[Theorem D]{HMV19}. In Section 1, we use their theorem to prove that if the left translation action of a countable dense subgroup $\Gamma$ of a lcsc group $G$ is strongly ergodic, then $G$ is almost unimodular. Following \cite{gg25}, we call a locally compact group $G$ with modular function $\Delta_G:G \rightarrow \R_+$ \textit{almost unimodular} if $\ker\Delta_G$ is open. In the second countable case, this is equivalent to $\Delta_G(G) \neq \R_+$. We prove a slightly generalized version of the following in Theorem \ref{strergau}:
\begingroup
\def\thethm{A}
\addtocounter{thm}{-1}

\begin{thm}
    Suppose $G$ is a lcsc group such that $\Delta_G(G) = \R_+$. Then, no left translation action of a countable dense subgroup on $G$ equipped with its left Haar measure is strongly ergodic.

\label{thmA}
\end{thm}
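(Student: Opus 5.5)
Assume toward a contradiction that $\Gamma \curvearrowright (G,\lambda_G)$ is strongly ergodic, where $\Gamma \le G$ is countable and dense and $\lambda_G$ is a left Haar measure. The plan is to produce an almost periodic measure in the class of $\lambda_G$ on the orbit equivalence relation $\mathcal{R}$, and then let \cite[Theorem D]{HMV19} force the modular data to be countable. This will contradict $\Delta_G(G)=\R_+$.

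\textbf{The left-right action and the Radon--Nikodym cocycle.} The group $\Gamma \times \Gamma$ acts on $G$ by $(\gamma,\delta)\cdot g = \gamma g \delta^{-1}$. Right translation by $\delta\in\Gamma$ preserves left $\Gamma$-orbits up to relabeling, so these maps normalize $\mathcal{R}$. The left Haar measure is invariant under left translations, so $\mathcal{R}$ is measure preserving for $\lambda_G$. Under right translation the Radon--Nikodym derivative is the constant $\Delta_G(\delta)$. Thus $\lambda_G$ is an $\mathcal{R}$-invariant infinite measure, and right translations scale it by the scalars $\Delta_G(\delta)$. The first step is to pass to a probability measure. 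Pick a positive integrable function $f$ and set $\mu = f\lambda_G$. Then $\mu$ is equivalent to $\lambda_G$ and $\mathcal{R}$ has cocycle $c(g,\gamma g) = f(\gamma g)/f(g)$. This cocycle is a coboundary, so $\mu$ is trivially almost periodic for $\mathcal{R}$, since $\lambda_G$ itself is invariant.

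\textbf{Applying \cite[Theorem D]{HMV19}.} The right translations $\rho_\delta$, $\delta\in\Gamma$, normalize $\mathcal{R}$ and scale the invariant measure $\lambda_G$ by $\Delta_G(\delta)$. I would apply the theorem to the equivalence relation $\mathcal{S}$ generated by the $\Gamma\times\Gamma$ action. Its Maharam-type cocycle relative to $\lambda_G$ is $(g, \gamma g\delta^{-1})\mapsto \Delta_G(\delta)^{-1}$, which takes values in the countable group $\Delta_G(\Gamma)$. So $\lambda_G$ is an almost periodic measure for $\mathcal{S}$. The content of Theorem D, as the analogue of Connes' lemma, is that strong ergodicity makes the set of such scalings rigid. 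Concretely, for a strongly ergodic $\mathcal{R}$, the $\mathcal{R}$-invariant measure in the class is unique up to scalar, and its almost periodic structure yields a closed subgroup of $\R_+^*$: the image of the normalizer in $\operatorname{Out}$ acts on the invariant measure by a homomorphism that is continuous for the topology induced by the strong ergodicity. The key consequence I expect to extract is the following. Strong ergodicity of $\mathcal{R}$ makes $\operatorname{Out}$-type groups Polish with the $u$-topology and makes the scaling homomorphism continuous.

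\textbf{The contradiction.} Now use density of $\Gamma$ and continuity of right translation on $L^1(G)$. If $\delta_n\to e$ in $G$, then $\rho_{\delta_n}\to \mathrm{id}$ in the normalizer topology, since $\|f\circ\rho_{\delta_n}-f\|_1\to 0$. Suppose $\Delta_G(G)=\R_+$. As $\Delta_G$ is a continuous surjection and $\Gamma$ is dense, $\Delta_G(\Gamma)$ is a dense countable subgroup of $\R_+$. Pick $\delta_n\in\Gamma$ with $\delta_n\to g_0$, where $\Delta_G(g_0)=2$. Then $\rho_{\delta_n}\rho_{\delta_m}^{-1}$ approaches the identity in the normalizer topology, while their scalings stay bounded away from $1$ unless the sequence is chosen appropriately. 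More simply, density lets me build elements close to the identity in $G$ whose right translations are asymptotically inner for $\mathcal{R}$ but scale the measure. From them I would construct a non-trivial almost invariant sequence, for example sets $A_n=\{g: \Delta_G(g)\in[1,t_n)\}$-type slices intersected with a fixed compact set. The step I expect to be hardest is making this precise: showing that, when $\ker\Delta_G$ is not open, sets of the form $\Delta_G^{-1}$(interval) normalized to measure about $1/2$ are almost invariant under $\Gamma$. Left translation by $\gamma$ shifts $\Delta_G$ by $\Delta_G(\gamma)$. For this I would instead use the theorem's conclusion that strong ergodicity forces the closure of the scaling group to be detected by a discrete spectrum, which is incompatible with a continuous surjection onto $\R_+$. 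This yields $\Delta_G(G)\ne\R_+$.
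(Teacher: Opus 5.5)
Your proposal never reaches a contradiction; the decisive step is replaced by heuristics that fail.

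\textbf{Where the argument breaks.} The almost periodic measures you produce ($\lambda_G$ on $\mathcal R$, and $\lambda_G$ on the $\Gamma\times\Gamma$ relation $\mathcal S$) have cocycles with values in $\{1\}$ and in the countable group $\Delta_G(\Gamma)$. So ``forcing the modular data to be countable'' is vacuous. What the paper actually takes from \cite{HMV19} is Theorem 5.3: for a strongly ergodic relation, the set $\bigcap_U\delta_\mu(\mathcal R\cap(U\times U))$ does not depend on the almost periodic measure $\mu$. On your symmetric relation $\mathcal S$, the left and right Haar measures both have cocycles with ergodic kernels and image $\Delta_G(\Gamma)$, so comparing them returns nothing.

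The continuity heuristic fails too. The scaling of $\rho_g$ on the invariant measure is $\Delta_G(g)^{\pm1}$, a perfectly continuous homomorphism. In your sequence argument, the scalings of $\rho_{\delta_n}\rho_{\delta_m}^{-1}$ are $\bigl(\Delta_G(\delta_n)/\Delta_G(\delta_m)\bigr)^{\pm1}\to 1$, not bounded away from $1$. Nor can density of $\Delta_G(\Gamma)$ in $\R_+$ be the obstruction by itself. Almost unimodular groups whose modular image is a countable dense subgroup of $\R_+$ do carry strongly ergodic countable dense actions, by the reduction to $\ker\Delta_G$ following Theorem~\ref{strergau} together with the last example of Section~3.

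Finally, preimages $\Delta_G^{-1}(I_n)$ of intervals are never non-trivial almost invariant sequences. A fixed $\gamma$ with $\Delta_G(\gamma)\neq 1$ shifts $\log\Delta_G$ by $\log\Delta_G(\gamma)$. Since $(\log\Delta_G)_*\mu$ is a fixed probability measure equivalent to Lebesgue measure, keeping $\mu(A_n)$ away from $0$ and $1$ keeps an endpoint of $\log I_n$ in a compact set. Hence $\mu(\gamma A_n\Delta A_n)$ stays bounded below.

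\textbf{The missing idea.} The paper's argument (Theorem~\ref{strergau} with $\Gamma_0=\Gamma$, $\Pi_0=\{e\}$) breaks the left--right symmetry using that $\Delta_G(G)=\R_+$ is uncountable. Pick $g$ with $\Delta_G(g)\notin\Delta_G(\Gamma)$ and a countable dense $\Lambda\ni g$. Consider the relation $\overline{\mathcal R}$ given by $x\sim\gamma x\lambda$ ($\gamma\in\Gamma$, $\lambda\in\Lambda$). It contains $\mathcal R$, so it is strongly ergodic if $\mathcal R$ is. The left Haar measure (cocycle $\Delta_G(\lambda)^{-1}$) and the right Haar measure (cocycle $\Delta_G(\gamma)^{-1}$) are both almost periodic. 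Their kernels are ergodic, since they contain the left $\Gamma$-action and the right $\Lambda$-action respectively. So the invariant above equals $\Delta_G(\Lambda)$ for one measure and $\Delta_G(\Gamma)$ for the other, contradicting \cite[Theorem 5.3]{HMV19}.

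\textbf{Rescuing your slicing idea.} Use oscillating sets in place of intervals. The map $\log\Delta_G$ is a $\Gamma$-equivariant factor map onto the translation action of the countable group $\log\Delta_G(\Gamma)$ on $\R$, with absolutely continuous image measure. By simultaneous Dirichlet approximation and a diagonal argument, choose $\xi_n\to\infty$ with $\xi_n a\to 0$ modulo $2\pi$ for every $a\in\log\Delta_G(\Gamma)$. Set $A_n=\{x:\cos(\xi_n\log\Delta_G(x))>0\}$. By Riemann--Lebesgue equidistribution, $\mu(A_n)\to 1/2$ and $\mu(\gamma A_n\Delta A_n)\to 0$ for every $\gamma\in\Gamma$. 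This is a short self-contained proof that bypasses \cite{HMV19}. It also applies verbatim to the left--right relations of Theorem~\ref{strergau}, because $\Delta_G(\gamma x\pi)=\Delta_G(\gamma)\Delta_G(x)\Delta_G(\pi)$.
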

\endgroup
\noindent For a non-unimodular, almost unimodular group $G$, given a countable dense subgroup $\Gamma$, it turns out that the strong ergodicity of $\Gamma \curvearrowright G$ is equivalent to the strong ergodicity of $\Gamma \cap \ker \Delta_G \curvearrowright \ker(\Delta_G)$. This reduces the question of strong ergodicity for left actions to the unimodular case.

For strongly ergodic left translation actions $\Gamma \curvearrowright G$, rigidity phenomena have been observed (see \cite{Fu11,Io17}). The action $\Gamma \curvearrowright G$ may ``remember'' the inclusion $\Gamma < G$ in the sense that orbit equivalent actions $\Gamma \curvearrowright G$ and $\Lambda \curvearrowright H$ imply the existence of a topological isomorphism $\delta : G \rightarrow H$ satisfying $\delta(\Gamma) = \Lambda$. The current most general rigidity statement for non-compact groups is due to Ioana \cite[Theorem A]{Io17}, which asserts that when $G$ is simply connected, a strongly ergodic action $\Gamma \curvearrowright G$ remembers the inclusion $\Gamma < G$ in this way. Notice that $G$ being simply connected and admitting a strongly ergodic left translation action forces $G$ to be unimodular. Given a non-unimodular almost unimodular lcsc group $G$, we show in Section 2 that for any strongly ergodic action by a countable dense subgroup $\Gamma \curvearrowright G$, there exists an orbit equivalent action $\Lambda \curvearrowright H$ such that $\Delta_G(G) \neq \Delta_H(H)$. However, in the case where the unimodular part $G_1$ of $G$ is simply connected, the action $\Gamma \curvearrowright G$ remembers the inclusion $\Gamma \cap \ker\Delta_G <\ker\Delta_G$, in the sense that if $\Gamma \curvearrowright G$ and $\Lambda \curvearrowright H$ are stably orbit equivalent, then there exists a topological isomorphism $\delta$ from $\ker\Delta_G$ to $\ker\Delta_H$ satisfying $\delta(\Gamma \cap \ker\Delta_G\big) = \Lambda \cap \ker\Delta_H$.

In Section 3, we produce examples of non-injective non-full von Neumann algebras. Using translations and combining left and right actions on non-almost unimodular groups, We construct non-injective non-full factors of types II$_\infty$ and III$_\lambda$ for $\lambda \in (0,1]$. Moreover, using strongly ergodic actions on almost unimodular groups, we construct translation actions whose equivalence relations $\mathcal{R}$ have prescribed Sd($\mathcal{R}$).

\section*{Acknowledgements}
We'd like to sincerely thank our advisor Brent Nelson for their encouragement and tireless support throughout the preperation of this paper.

\section{Strong Ergodicity, the Non-Unimodular Case}

We recall certain notions of orbit equivalence. All actions are assumed to be non-singular.

    Given two actions $\Gamma \curvearrowright (G,\mu)$ and $\Lambda \curvearrowright (H,\lambda)$, the actions are called \textit{orbit equivalent} (OE) if there exists a non-singular Borel isomorphism of measure spaces $\theta: G \to H$ such that for almost every $x \in G$, ``$\theta$ sends orbits to orbits":
    \begin{equation*}
        \theta(\Gamma x) = \Lambda \theta(x).
    \end{equation*}
    The two actions are called \textit{stably orbit equivalent} (SOE) if there are $E \subset G$, $F \subset H$ two non-null sets such that for almost every $x \in G$, there exists a Borel isomorphism of measure spaces $\theta: E \to F$ such that
    \begin{equation*}
        \theta(\Gamma x \cap E) = \Lambda\theta(x) \cap F
    \end{equation*}

To make our statements more concise, we define the notion of a \textit{countable dense inclusion}.

\begin{defi}
For a lcsc group $G$, denote by $m_G$ its left Haar measure and $\mu_G$ a probability measure on $G$ equivalent to $m_G$. Let $\Gamma$ be a countable dense subgroup of $G$. The inclusion $\Gamma < G$ is called a \textit{countable dense inclusion}. The left translation action of $\Gamma$ on the measure space $(G,\mu_G)$ is called a \textit{countable dense action}. 
\end{defi}

Countable dense actions preserve the possibly infinite measure $m_G$. It is likely known to experts that countable dense actions are ergodic. For the convenience of the online researcher, we provide an argument. Let $L_gf(x) = f(g^{-1}x)$ for $g \in G$. Then, for any $f \in C_c(G)$ (compactly supported continuous function) and any $A \subset X$, the map $T_f \colon g \mapsto \int_G L_gf(x) 1_A (x) dm_G(x)$ is continuous \cite[Proposition 2.42]{foAH}. If $A$ is invariant under the countable dense action, $T$ is constant for all $f\in C_c(G)$. Therefore if $A$ is non-null, $1_A dm_G$ is a left Haar measure \cite[Proposition 11.4]{foRA}. Hence $A = X$ $m_G$-almost everywhere, equivalently $\mu_G$-almost everywhere, which implies ergodicity.

We will prove a slightly more general version of Theorem \ref{thmA}. Let us recall the Radon-Nikodym cocyle. Let $\mathcal{R}$ be a countable Borel equivalence relation on a standard probability space $(X,\mu)$. On $X \times X$-Borel subsets of $\mathcal{R}$, define the measures $\nu_\ell(E) := \int_X \sum_{y \sim x} 1_E (x,y) d\mu(x)$ and $\nu_r(E) := \int_X \sum_{x \sim y} 1_E (x,y) d\mu(y)$. The \textit{Radon-Nikodym cocycle} $\delta_{\mu}$ is $\frac{d\nu_\ell}{d\nu_r}$.

\begin{thm} \label{strergau}
    Suppose $G$ is a non-almost unimodular lcsc group. Suppose $\Gamma_0$ and $\Pi_0$ are two countable subgroups of $G$. Then, the equivalence relation $\mathcal{R}$ on $(G, \mu_G)$ given by
    \begin{equation*}
        x \sim \gamma x \pi \ \ \ \  \gamma \in \Gamma_0, \pi \in \Pi_0
    \end{equation*}
    is not strongly ergodic.
\end{thm}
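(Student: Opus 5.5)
Assume towards a contradiction that $\mathcal{R}$ is strongly ergodic (in particular ergodic). The plan is to apply \cite[Theorem D]{HMV19}: if a strongly ergodic equivalence relation carries a $\sigma$-finite measure whose Radon--Nikodym cocycle takes values in a countable group, then that group of values is essentially constrained. Concretely, for a strongly ergodic $\mathcal{R}$ the almost periodic measures are well behaved, and one obtains that the closure of the essential range of the cocycle cannot be all of $\R_+$ while only countably many values occur. Our contradiction will be that for the left Haar measure $m_G$ (equivalent to $\mu_G$), the values taken by $\delta_{m_G}$ form a countable subgroup of $\R_+$, but the hypothesis $\Delta_G(G)=\R_+$ lets us perturb the measure to contradict the conclusion of that theorem.

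\textbf{Step 1: compute the cocycle.} Left translation by $\gamma$ preserves $m_G$. Right translation by $\pi$ scales $m_G$ by $\Delta_G(\pi)$, since $m_G(E\pi)=\Delta_G(\pi)m_G(E)$. Hence on $\mathcal{R}$ the Radon--Nikodym cocycle of $m_G$ is $\delta_{m_G}(\gamma x\pi,x)=\Delta_G(\pi)^{\pm1}$. It takes values in the countable group $\Delta_G(\Pi_0)$, so $m_G$ is an almost periodic measure for $\mathcal{R}$.

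\textbf{Step 2: produce a second almost periodic measure.} Because $\Delta_G(G)=\R_+$ and $\Delta_G$ is a continuous homomorphism, for any $t\in\R_+$ there is $g\in G$ with $\Delta_G(g)=t$. The plan is to consider measures $m_G^{f}=e^{f}m_G$ with $f(x)=\phi(\log\Delta_G(x))$ for suitable functions $\phi$ on $\R$. The cocycle of such a measure is $\Delta_G(\pi)^{\pm1}e^{f(\gamma x\pi)-f(x)}$, and $f(\gamma x\pi)-f(x)=\phi(\log\Delta_G(x)+\log\Delta_G(\gamma\pi))-\phi(\log\Delta_G(x))$. Choose $\phi$ so that this is almost periodic but with values in a different countable group, for instance $\phi(s)=c\lfloor s/\alpha\rfloor$-type step functions. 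Alternatively, use $\Delta_G(x)^{s}$ weighting to change the point-spectrum. \cite[Theorem D]{HMV19} asserts that for strongly ergodic $\mathcal{R}$, any two almost periodic measures are related by $\mu_2=e^{h}\mu_1$ with $h$ essentially of a rigid form (a "Connes"-type uniqueness: the set of almost periodic measures modulo scaling is controlled by the $\tau$-invariant or Sd). The aim is to exhibit a family of almost periodic measures that is too large, with point spectra $\Delta_G(\Pi_0)$ versus ones generated by unrelated numbers, contradicting that rigidity.

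\textbf{Main obstacle and fallback.} The delicate part is matching exactly what \cite[Theorem D]{HMV19} provides. If that route is awkward, I would instead directly construct a nontrivial almost invariant sequence. Since $\Delta_G(G)=\R_+$ and $\Gamma_0,\Pi_0$ are countable, the set $\log\Delta_G(\Gamma_0\Pi_0)$ generates a countable subgroup $C$ of $\R$, and the action descends through $x\mapsto\log\Delta_G(x)$ to translations of $\R$ by $C$. Let $A_n=\{x:\ \log\Delta_G(x)\in B_n\}$, where $B_n\subset\R$ are sets with $\mu$-measure bounded away from $0$ and $1$ and asymptotically invariant under each fixed element of $C$. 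For instance, take $B_n=\bigcup_k[2kn,(2k+1)n)$, which works for the pushforward measure of $\mu_G$, equivalent to Lebesgue measure. Then $\mu_G(A_n\Delta gA_n)\to0$ for each generator of $\mathcal{R}$, hence for every element of $[\mathcal{R}]$ by a standard countable-decomposition argument. Nontriviality requires the pushforward measure to be nonatomic with full support, which is guaranteed precisely because $\Delta_G$ is open onto $\R_+$. That it is open onto $\R_+$ follows from the open mapping theorem for lcsc groups, using that the image is $\sigma$-compact and not countable. This fallback is the route I would actually pursue first, since it avoids HMV entirely.
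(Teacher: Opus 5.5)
\textbf{There is a genuine gap: your explicit almost invariant sequence fails.}

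Your Step 2 does not yet amount to an argument. It never says what \cite[Theorem D]{HMV19} actually gives. A Connes-type uniqueness statement could only be contradicted by a second almost periodic measure whose cocycle has ergodic kernel and a different range, and you do not check either property for $e^{f}m_G$. The paper gets such a measure for free:
\begin{enumerate}
\item it enlarges $\Gamma_0,\Pi_0$ to countable dense subgroups $\Gamma,\Pi$;
\item it adjoins some $g$ with $\Delta_G(g)\notin\Delta_G(\Gamma\vee\Pi)$ to form $\Lambda$;
\item it compares left and right Haar measure for the $\Gamma$--$\Lambda$ left--right relation, whose cocycle kernels contain the dense left $\Gamma$-action and the dense right $\Lambda$-action respectively;
\item HMV then forces $\Delta_G(\Lambda)=\Delta_G(\Gamma)$, a contradiction.
\end{enumerate}

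So your proof rests on the fallback, and there the explicit sequence is wrong. Write $\nu=(\log\Delta_G)_*\mu_G$. The point $0$ is an endpoint of $B_n=\bigcup_k[2kn,(2k+1)n)$ for every $n$. For $0<c<n$ one checks $[0,c)\subset B_n\,\Delta\,(B_n+c)$, so $\nu(B_n\,\Delta\,(B_n+c))\geq\nu([0,c))>0$ for all $n$. Shifting or rescaling the blocks does not help. As long as the blocks are long and $\nu(B_n)$ stays away from $0$ and $1$, some endpoint lies in a fixed compact window carrying most of $\nu$. The adjacent interval of length $|c|$ then has $\nu$-mass bounded below there.

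\textbf{The factor idea itself is sound.} Once repaired, it is a genuinely different and more elementary proof than the paper's. Since $\log\Delta_G(\gamma x\pi)=\log\Delta_G(x)+\log\Delta_G(\gamma\pi)$, preimages of almost invariant sets for the translation action of $C$ on $(\R,\nu)$ are almost invariant for $\mathcal{R}$. This needs neither density of $\Gamma_0,\Pi_0$ nor HMV.

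The repair is to use fine-scale sets instead of long blocks:
\begin{enumerate}
\item Enumerate $C=\{c_1,c_2,\dots\}$.
\item By simultaneous Dirichlet approximation (pigeonhole on multiples of a large number), pick reals $t_n\geq n$ with $\operatorname{dist}(t_nc_i,\Z)<1/n$ for $i\leq n$.
\item Let $B_n$ be the set of $s$ for which the fractional part of $t_ns$ lies in $[0,1/2)$.
\end{enumerate}
Then $B_n\,\Delta\,(B_n+c_i)$ is contained in the set where the fractional part of $t_ns$ is within $1/n$ of $0$ or $1/2$. Because $\nu$ is absolutely continuous, you can approximate its density in $L^1$ by step functions. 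This gives $\nu(B_n)\to 1/2$ and $\nu(B_n\,\Delta\,(B_n+c_i))\to 0$.

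Alternatively, you can cite that an action of the abelian group $C$ generates a hyperfinite relation, which is never strongly ergodic on a nonatomic space.

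For the first version you need $\nu$ to be equivalent to Lebesgue measure, not just nonatomic with full support. This follows from Weil's formula for $\ker\Delta_G$ together with the open mapping theorem.
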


\begin{proof}
    Denote by $\Delta_G : G \rightarrow \R_+$ the modular homomorphism. Since $G$ is non-almost unimodular, one has
        \begin{equation*}
        \Delta_G(G) = \R_+,   
    \end{equation*}
    (see the discussion following \cite[Definition 2.2]{gg25}).
    Let $\Gamma$ and $\Pi$ be countable dense subgroups of $G$ containing $\Gamma_0$ and $\Pi_0$ respectively. Since $\Gamma$ and $\Pi$ are countable, there exists $g \in G$ such that $\Delta_G(g) \notin \Delta_G(\Gamma \vee \Pi)$, where $\vee$ denotes the generated subgroup. Let $\Lambda := \Pi \vee g $. Notice $\Lambda$ is also a countable dense subgroup and $\Delta_G(\Gamma) \subsetneq \Delta_G(\Lambda)$.

    Consider the left-right action equivalence relation $\overline{\mathcal{R}}$ on $(G,\mu_g)$ given by 
    \begin{equation*}
        x \sim \gamma x \lambda \ \ \ \  \gamma \in \Gamma, \lambda \in \Lambda
    \end{equation*}
    We get that the Radon-Nikodym cocycle with respect to $m_G$ is
    \begin{equation*}
        \delta_{m_G}(x,\gamma x \lambda) = \frac{1}{\Delta_G(\lambda)}
    \end{equation*}
    and for the right Haar measure $\rho_G$:
    \begin{equation*}
        \delta_{\rho_G} (x,\gamma x \lambda) = \frac{1}{\Delta_G(\gamma)}
    \end{equation*}
    Therefore both left and right Haar measures are almost periodic for $\overline{\mathcal{R}}$. Now suppose $\mathcal{R}$ is strongly ergodic. Then $\overline{\mathcal{R}}$ is strongly ergodic, and by \cite[Theorem 5.3]{HMV19},
    \begin{equation*}
        \bigcap_{U \subset G \text{ Borel, } U \neq \emptyset} \delta_{m_G}\big(\mathcal{\overline{R}} \cap  ( U \times U)\big) = \bigcap_{U \subset G \text{ Borel, } U \neq \emptyset} \delta_{\rho_G}\big(\mathcal{\overline{R}} \cap  ( U \times U)\big)
    \end{equation*}
    Notice $\ker(\delta_{m_G})$ contains the countable dense action $\Gamma \curvearrowright G$ and hence is ergodic. Similarly, $\ker(\delta_{\rho_G})$ is ergodic. Therefore by the proof of \cite[Lemma 5.6]{HMV19}, $\delta_{m_G}\big(\mathcal{\overline{R}} \cap  ( U \times U)\big)$ is the same subset of $\R_+$ for all non-null $U$. The same holds for $\delta_{\rho_G}$. We obtain
    \begin{equation*}
    \Delta_G(\Lambda) = \delta_{m_G}(\mathcal{\overline{R}}) = \delta_{\rho_G}(\overline{\mathcal{R}}) = \Delta_G(\Gamma),
    \end{equation*}
    which is absurd.
\end{proof}

We have established that only almost unimodular lcsc groups can admit strongly ergodic countable dense actions. Let us now consider non-unimodular almost unimodular groups. As a measure space, such groups are countably many copies of their unimodular part. This fact can be used to show that whether a countable dense action on such a group is strongly ergodic can be gleaned from its unimodular part.

\begin{thm}
    Suppose $G$ is an almost unimodular lcsc group. Let $\Gamma$ be a countable dense subgroup of $G$. The following are equivalent:
    \begin{enumerate}[label=(\roman*),ref=(\roman*)]
        \item The left translation action of $\Gamma$ on $G$ is strongly ergodic.
        \item The left translation action of $\Gamma_1 := \Gamma \cap \ker\Delta_G$ on $G_1 := \ker\Delta_G$ is strongly ergodic.
    \end{enumerate}
\end{thm}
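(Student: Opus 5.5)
The plan is to use the decomposition of $G$ into cosets of the open normal subgroup $G_1=\ker\Delta_G$. Since $G$ is almost unimodular, $G_1$ is open, so $G/G_1\cong\Delta_G(G)$ is a countable (discrete) group. Thus $G=\bigsqcup_{i} g_iG_1$ is a countable disjoint union of open cosets, and the left Haar measure $m_G$ restricts on $G_1$ to a Haar measure $m_{G_1}$. Density of $\Gamma$ in $G$ gives that $\Gamma_1=\Gamma\cap G_1$ is dense in the open subgroup $G_1$. It also shows that $\Gamma$ meets every coset $g_iG_1$, so we may choose the representatives $g_i\in\Gamma$. The key observation is that the orbit equivalence relation $\mathcal{R}_\Gamma$ of $\Gamma\curvearrowright G$, restricted to $G_1$, is exactly the orbit relation of $\Gamma_1\curvearrowright G_1$: if $x,\gamma x\in G_1$ then $\gamma\in G_1$. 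Moreover $G_1$ meets every $\mathcal{R}_\Gamma$-class, because $g_i^{-1}$ moves $g_iG_1$ into $G_1$. Hence $\mathcal{R}_{\Gamma_1}$ on $G_1$ is the restriction of $\mathcal{R}_\Gamma$ to a complete section. The equivalence then reduces to the general fact that strong ergodicity passes between an ergodic relation and its restriction to a non-null subset, with measures $\mu_G$ and $\mu_G|_{G_1}$ normalized.

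I would prove this transfer directly rather than cite it. For (i)$\Rightarrow$(ii), let $A_n\subset G_1$ be almost $\Gamma_1$-invariant. Extend it to $\tilde A_n=\bigsqcup_i g_iA_n$, choosing $g_0=e$. For $\gamma\in\Gamma$ and each $i$ we have $\gamma g_i = g_j\gamma'$ with $\gamma'\in\Gamma_1$. The measure of $\gamma\tilde A_n\,\Delta\,\tilde A_n$ on the coset $g_jG_1$ is then controlled by $\mu$-mass of $\gamma'A_n\Delta A_n$ transported by $g_j$. Since $\mu_G$ is a probability measure equivalent to $m_G$, one truncates to finitely many cosets carrying mass $>1-\varepsilon$. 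On those cosets, absolute continuity of the fixed densities (uniform integrability) converts $m$-small or $\mu$-small differences into small ones for the transported measure. This gives that $\tilde A_n$ is almost $\Gamma$-invariant, and strong ergodicity of $\Gamma$ forces $\mu_G(\tilde A_n)\to0$ or $1$. Restricting to $G_1$ (a set of positive mass on which the same is true uniformly via the translate structure) shows $A_n$ is trivial. Almost invariance for the full group reduces to almost invariance under the countable group generating the orbits, which is standard.

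For (ii)$\Rightarrow$(i), let $B_n\subset G$ be almost $\Gamma$-invariant. Then for each $i$ the sets $C_n^i=g_i^{-1}(B_n\cap g_iG_1)\subset G_1$ are asymptotically equal to $C_n^0=B_n\cap G_1$, since $g_i^{-1}\in\Gamma$ nearly preserves $B_n$. Also $C_n^0$ is almost $\Gamma_1$-invariant. By (ii), $\mu(C_n^0)(1-\mu(C_n^0))\to0$. Passing to subsequences where $\mu(C_n^0)\to0$ or $\to1$, and summing over finitely many cosets with small tail, gives $\mu_G(B_n)\to0$ or $1$.

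The main obstacle is bookkeeping of measures: $\mu_G$ is only equivalent to Haar measure, and the translates $g_i$ scale $m_G$ by $\Delta_G$-factors when acting on the right. So "$\mu$-small" must be transferred between cosets using absolute continuity on each fixed coset together with the tail truncation. I would handle this by fixing $\varepsilon$, choosing finitely many cosets, and using that on each one the pushforward of $\mu_G|_{G_1}$ by $g_i$ is equivalent to $\mu_G|_{g_iG_1}$ with a fixed $L^1$ density.
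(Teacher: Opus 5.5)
Your proposal is correct and follows essentially the paper's route. For (i)$\Rightarrow$(ii) you use the same extension $\bigsqcup_i g_iA_n$ with coset representatives $g_i\in\Gamma$, controlled by truncating to finitely many cosets and using absolute continuity of the translated measures; for (ii)$\Rightarrow$(i) you restrict to cosets and transport back to $G_1$ by elements of $\Gamma$. The only real difference is that you argue directly rather than by contrapositive, comparing each piece $g_i^{-1}(B_n\cap g_iG_1)$ with $B_n\cap G_1$ along subsequences, which handles the convergence bookkeeping more explicitly than the paper's case analysis on the limits of $\mu_G(B_n^\lambda)$.
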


\begin{proof}
    $(i) \Rightarrow (ii)$: Suppose the action of $\Gamma_1$ on $G_1$ contains a non-trivial almost invariant sequence $A_n$. For each $\lambda$, choose $g_\lambda \in \Gamma \cap \Delta_G^{-1}(\{\lambda\})$. Such elements exist by the density of $\Gamma$ requiring $\Delta_G(\Gamma) = \Delta_G(G)$. Define
    \begin{equation*}
        B_n :=\bigcup_{\lambda \in \Delta_G(G)}   g_\lambda A_n.        
    \end{equation*}
    We will show that $B_n$ is an almost invariant sequence for the countable dense action $\Gamma \curvearrowright G$. Given $\epsilon>0$, let $I=\{\lambda_1, \ldots, \lambda_n\} \subset \Delta_G(G)$ such that $\mu_G(\Delta_G^{-1}(I)) > 1 - \epsilon/2$. Given $s \in \Gamma_1$ and $\lambda \in \Delta_G(G)$, notice 
    \begin{equation*}
        [(sg_\lambda B_n) \Delta B_n ]\cap \Delta_G^{-1}(\{\lambda_i\})=g_{\lambda_i}(A_n \Delta g_{\lambda_i}^{-1} s  g_\lambda g_{\lambda^{-1}{\lambda_i}}A_n)
    \end{equation*}
    Notice $g_{\lambda_i}^{-1} s  g_\lambda g_{\lambda^{-1}{\lambda_i}} \in \Gamma_1$. Using $A_n$ is almost invariant for $\Gamma_1$ acting on $G_1$, that $I$ is finite, that $(g_\lambda^{-1})_* \mu_G$ is absolutely continuous with respect to the finite measure $\mu_G$, one can find $m$ large enough such that
    \begin{equation*}
        \sum_{\lambda_i \in I}\mu_G\big(g_{\lambda_i}(A_m \Delta g_{\lambda_i}^{-1} s  g_\lambda g_{\lambda^{-1}{\lambda_i}}A_m)\big) < \epsilon/2.
    \end{equation*}
We have
    \begin{align*}
        \mu_G(sg_\lambda B_m \Delta B_m) &= \mu_G\big((sg_\lambda B_m \Delta B_m) \cap \Delta_G^{-1}(I)\big) + \mu_G\big(G \setminus \Delta_G^{-1}(I)\big) \\
        &< \sum_{\lambda_i \in I}\mu_G\big(g_{\lambda_i}(A_m \Delta g_{\lambda_i}^{-1} s  g_\lambda g_{\lambda^{-1}{\lambda_i}}A_m)\big) + \epsilon/2 \\
        &< \epsilon
    \end{align*}
    Hence $B_n$ is an almost invariant sequence for $\Gamma$ acting on $G$. Note that $B_n$ is non-trivial because $A_n$ is non-trivial.\\

    \noindent $(ii) \Rightarrow (i)$: Suppose the action of $\Gamma$ on $G$ contains a non-trivial almost invariant sequence $A_n$. Then, for each $\lambda \in \Delta_G(G)$, the sequence $B_n^\lambda := A_n \cap \Delta_G^{-1}(\{\lambda\})$ is a possibly trivial almost invariant sequence for the countable dense action $\Gamma_1 \curvearrowright \Delta_G^{-1}(\{\lambda\})$. Since $A_n$ is non trivial, $\mu_G(B_n^\lambda)$ can't converge to $\mu_G(\Delta_G^{-1}(\{\lambda\}))$ for all $\lambda$. Similarly, $\mu_G(B_n^\lambda)$ can't converge to $0$ for all $\lambda$. Suppose there exists $\lambda_1$ and $\lambda_2$ such that $\mu_G(B_n^{\lambda_1})$ converges to $\mu_G(\Delta_G^{-1}(\{\lambda_1\}))$ and $\mu_G(B_n^{\lambda_2})$ converges to $0$. Let $h \in \Gamma \cap \Delta_G^{-1}(\{\lambda_2^{-1}\lambda_1\})$. We have:
    \begin{align*}
        \mu_G(hA_n \Delta A_n) \geq \mu_G(h B^{\lambda_1}_n \Delta B^{\lambda_2}_n) \geq \mu_G(hB^{\lambda_1}_n) - \mu_G(B^{\lambda_2}_n) = \mu_G(B^{\lambda_1}_n) - \mu_G(B^{\lambda_2}_n) \rightarrow \mu_G(\Delta_G^{-1}(\{\lambda_1\})) > 0
    \end{align*}
    contradicting the almost invariance of $A_n$. Therefore there exists a $\lambda$ such that  $B_n^\lambda$ is a non-trivial almost invariant sequence for the countable dense action $\Gamma_1 \curvearrowright \Delta_G^{-1}(\{\lambda\})$, which is orbit equivalent (OE) to the action of $\Gamma_1$ on $G_1$ via the map $x \mapsto g_{\lambda^{-1}} x$. The OE-invariance of strong ergodicity completes the proof.
\end{proof}

\section{Absence of Rigidity}

In the non-unimodular case, we demonstrate the absence of rigidity with the following proposition.

\begin{prop} \label{oep}
    Every strongly ergodic left translation action of a countably infinite subgroup $\Gamma$ of a non-unimodular group $G$ is orbit equivalent to $\big(\Gamma \cap \ker(\Delta_G) \big)\times \Z \curvearrowright \ker(\Delta_G) \times \Z$.
\end{prop}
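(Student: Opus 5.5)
The plan is to reduce to the almost unimodular case using Theorem \ref{strergau}, and then write $G$ explicitly as a disjoint union of translates of $G_1:=\ker(\Delta_G)$ indexed by $\Z$. Throughout, $\Gamma$ is a countable dense subgroup, as in the rest of the paper.

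\emph{Step 1: $\Delta_G(G)$ is infinite cyclic.} Applying Theorem \ref{strergau} with $\Gamma_0=\Gamma$ and $\Pi_0=\{e\}$ shows that a strongly ergodic countable dense action forces $G$ to be almost unimodular. Hence $G_1$ is open and $\Delta_G(G)\neq\R_+$. Since $G$ is second countable, $G/G_1$ is countable, so $\Delta_G(G)$ is a countable subgroup of $\R_+$. The step I expect to need the most care is upgrading this to cyclicity. Because $G_1$ is open, $G/G_1$ is discrete, and the image $\Delta_G(G)$ should be a proper closed subgroup of $\R_+$; I would cite the discussion following \cite[Definition 2.2]{gg25} for this. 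Such a subgroup is either trivial or of the form $\lambda_0^{\Z}$, and non-unimodularity excludes the trivial case. So $\Delta_G(G)=\lambda_0^{\Z}$ for some $\lambda_0\neq 1$.

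\emph{Step 2: decomposition of $G$ and of $\Gamma$.} Since $G_1$ is open and $\Gamma$ is dense, $\Gamma$ meets every coset of $G_1$, so $\Delta_G(\Gamma)=\Delta_G(G)$. Fix $t\in\Gamma$ with $\Delta_G(t)=\lambda_0$. Then $G=\bigsqcup_{k\in\Z}t^kG_1$ and $\Gamma=\bigsqcup_{k}t^k\Gamma_1$, where $\Gamma_1=\Gamma\cap G_1$. Define
\begin{equation*}
\theta\colon G\to G_1\times\Z,\qquad \theta(x)=(t^{-k}x,\,k)\quad\text{where }\Delta_G(x)=\lambda_0^k .
\end{equation*}
This is a Borel bijection. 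On each piece $t^kG_1$ it is left translation by $t^{-k}$, which carries $m_G$ restricted to $t^kG_1$ onto $m_G$ restricted to $G_1$, and the latter is a Haar measure of $G_1$. Hence $\theta_*m_G=m_{G_1}\otimes(\text{counting measure})$. Both $\mu_G$ and the probability measure used on $G_1\times\Z$ are equivalent to these Haar-type measures, so $\theta$ is non-singular.

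\emph{Step 3: $\theta$ maps orbits onto orbits.} Let $x=t^jy$ with $y\in G_1$. Since $\Gamma t^j=\Gamma$, we get $\Gamma x=\Gamma y=\bigsqcup_k t^k\Gamma_1 y$. Therefore
\begin{equation*}
\theta(\Gamma x)=\Gamma_1y\times\Z=(\Gamma_1\times\Z)\cdot(y,j)=(\Gamma_1\times\Z)\cdot\theta(x),
\end{equation*}
where $\Gamma_1\times\Z$ acts on $G_1\times\Z$ by left translation on the first coordinate and by translation on the second. Thus $\theta$ is an orbit equivalence.

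Orbit equivalence only requires equality of orbits, so it does not matter that $\Gamma\cong\Gamma_1\rtimes\langle t\rangle$ is generally a semidirect rather than a direct product. The only nontrivial input is therefore Step 1 (cyclicity of the modular image); the remainder is bookkeeping.
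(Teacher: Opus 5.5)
\textbf{There is a genuine gap in Step 1.} Your claim that $\Delta_G(G)$ is a proper \emph{closed} subgroup of $\R_+$, hence infinite cyclic, is false. Almost unimodularity makes $G_1=\ker\Delta_G$ open, so $\Delta_G(G)\cong G/G_1$ is countable and different from $\R_+$. It does not make the image closed. The induced injection $G/G_1\to\R_+$ is only continuous, so discreteness of $G/G_1$ says nothing about the topology of its image in $\R_+$. The discussion in \cite{gg25} gives only the dichotomy ``$\Delta_G(G)=\R_+$ or $\ker\Delta_G$ is open''.

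Here is a counterexample. Take a countable non-cyclic $D<\R_+$, such as $\langle 2,3\rangle$ or $\mathbb{Q}_{>0}$, with the discrete topology. Let $G=\R^n\rtimes(SO(n)\times D)$ with $D$ acting by scalars; this is essentially the group $G^n_T$ of the last example in Section 3. Then $G_1=\R^n\rtimes SO(n)$ is open, and $\Delta_G(G)=\{d^{n}:d\in D\}\cong D$ is dense in $\R_+$ and not cyclic. For $n\geq 3$ this $G$ carries strongly ergodic countable dense actions: adjoin $D$ to a strongly ergodic countable dense subgroup of $G_1$ from \cite{BI20} and apply the equivalence theorem of Section 1. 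So this case is genuinely covered by the proposition, and your argument does not reach it.

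\textbf{The repair, which is the paper's route, is that cyclicity is never needed.} Let $D=\Delta_G(G)$. It is countably infinite, being a nontrivial subgroup of the torsion-free group $\R_+$. For each $s\in D$ pick $\gamma_s\in\Gamma\cap\Delta_G^{-1}(s)$, and set
\[
\theta(x)=\bigl(\gamma_{\Delta_G(x)}^{-1}x,\;\Delta_G(x)\bigr)\in G_1\times D .
\]
Since $\Gamma=\bigsqcup_{s\in D}\gamma_s\Gamma_1$, your Steps 2 and 3 go through verbatim with $\gamma_s$ in place of $t^k$. Thus $\theta$ is non-singular, and with $y=\gamma_{\Delta_G(x)}^{-1}x$ you get $\theta(\Gamma x)=\Gamma_1y\times D=(\Gamma_1\times D)\cdot\theta(x)$.

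Then compose with $\mathrm{id}_{G_1}\times\beta$ for an arbitrary bijection $\beta\colon D\to\Z$. Both $D\curvearrowright D$ and $\Z\curvearrowright\Z$ are transitive, so this map sends each orbit $\Gamma_1y\times D$ onto $\Gamma_1y\times\Z$. It is trivially non-singular on the countable second coordinate. This is exactly the paper's remark that $\Gamma\curvearrowright\Gamma$ is OE to $\Z\curvearrowright\Z$ for any countably infinite $\Gamma$.

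With this change, your orbit computation is actually a tidier verification than the paper's two-sided calculation. As written, however, your proof covers only the case $\Delta_G(G)=\lambda_0^{\Z}$.
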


\begin{proof}
    It suffices to show the action is OE to $\big(( \Gamma \cap \ker(\Delta_G)) \times \Delta_G(G) \big) \curvearrowright \ker(\Delta_G) \times \Delta_G(G)$, where $\Delta_G(G)$ is equipped with the discrete topology. The result would then follow from $\Gamma \curvearrowright \Gamma$ being OE to $\Z \curvearrowright \Z$ for any countably infinite discrete group $\Gamma$. \\
    \noindent By Theorem \ref{strergau}, $G$ is almost unimodular and $\Delta_G(G)$ is a countable proper subgroup of $\R_+$. 
    Pick $\gamma_t$ from $\Gamma \cap \Delta_G^{-1}(t)$ for all $t \in \Delta_G(G)$. We claim that the Borel isomorphism $\theta$ defined by $\theta(x) = (\gamma_t^{-1}x,t)$ for $\Delta_G(x) = t$ implements the orbit equivalence. Indeed, for $x \in G$ with $\Delta_G(x) = u$ and $\gamma \in \Gamma$ with $\Delta_G(\gamma) = v$
    \begin{equation*}
    \theta (\gamma x) = (\gamma^{-1}_{uv}\gamma x, uv) =  (\gamma_{uv}^{-1}\gamma\gamma_u,v)(\gamma_u^{-1}x,u) = (\gamma_{uv}^{-1}\gamma\gamma_u,v) \theta(x)
    \end{equation*}
    On the other hand, for $\gamma \in \Gamma \cap \Delta_G(G)$ we have
    \begin{equation*}
        (\gamma,t)\theta(x) = (\gamma \gamma_u^{-1}x,tu) = (\gamma_{tu}^{-1} \gamma^{}_{tu} \gamma \gamma_u^{-1} x,tu) = \theta(\gamma_{tu} \gamma \gamma_u^{-1} x)
    \end{equation*}
    proving that the countable dense action $\Gamma \curvearrowright G$ is OE to $\big(( \Gamma \cap \ker(\Delta_G)) \times \Delta_G(G) \big) \curvearrowright \ker(\Delta_G) \times \Delta_G(G)$. 
\end{proof}

\begin{cor}
    Given a strongly ergodic countable dense action $\Gamma \curvearrowright G$ on a non-unimodular group, there exists at least countably many non topologically isomorphic groups $G_i$ which admit orbit equivalent countable dense actions.
\end{cor}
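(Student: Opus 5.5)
The idea is to keep the unimodular part $G_1 := \ker\Delta_G$ and the subgroup $\Gamma_1 := \Gamma\cap\ker\Delta_G$ fixed, and to vary only the ``modular'' part. Proposition \ref{oep} will then show that all the resulting translation actions are orbit equivalent to one another. By Theorem \ref{strergau}, $G$ is almost unimodular, so $G_1$ is open and $\Delta_G(G)$ is a nontrivial countable subgroup of $\R_+$. By Proposition \ref{oep}, $\Gamma \curvearrowright G$ is OE to $\Gamma_1\times\Z \curvearrowright G_1\times\Z$.

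For each $n\geq 1$, consider
\begin{equation*}
G_n' := G_1\times \Z^n, \qquad \Gamma_n' := \Gamma_1\times \Z^n,
\end{equation*}
where $\Z^n$ carries the discrete topology. Then $G'_n$ is lcsc and $\Gamma'_n$ is a countable dense subgroup, since $\Gamma_1$ is dense in $G_1$. Density of $\Gamma_1$ in $G_1$ holds because $\Gamma$ is dense in $G$ and $G_1$ is open, so $\Gamma\cap G_1$ is dense in $G_1$.

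These $G_n'$ are unimodular, however, so they are not ``non-unimodular'' witnesses. To get non-unimodular ones I will instead use semidirect products $G_1\times(\Z^n\rtimes\Z)$, or more simply $G_1\times H_n$, where $H_n$ is a countable discrete group. Since discrete groups are unimodular, I will replace $H_n$ by a non-unimodular almost unimodular group: for example $G_1\times (\Q_p\rtimes p_n^{\Z})$ variants. The simplest choice, though, is to allow any groups at all, since the corollary only asks for non-isomorphic groups admitting OE actions. So I take $G_n := G_1\times\Z^n$ with $\Gamma_n:=\Gamma_1\times\Z^n$.

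Each $\Gamma_n\curvearrowright G_n$ splits as a product of $\Gamma_1\curvearrowright G_1$ and $\Z^n\curvearrowright\Z^n$. The latter is the transitive action on a countable set, which is OE to $\Z\curvearrowright\Z$, exactly as used in the proof of Proposition \ref{oep}. Taking the product of the identity on $G_1$ with a bijection $\Z^n\to\Z$ that sends orbits to orbits (there is a single orbit on each side) shows that $\Gamma_n\curvearrowright G_n$ is OE to $\Gamma_1\times\Z\curvearrowright G_1\times\Z$, and hence to $\Gamma\curvearrowright G$. Non-singularity is clear, since the map is a product of the identity with a bijection of countable sets carrying counting measure.

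It remains to show that the $G_n$ are pairwise non-isomorphic as topological groups. Any topological isomorphism $G_n\to G_m$ maps the identity component to the identity component, but that does not detect the $\Z^n$ factor. A better invariant uses the fact that $G_1$ is open in $G_n$, so $G_n/G_1\cong\Z^n$; however $G_1$ is not canonically determined by the topology. I would instead use the abelianization modulo the closure of the commutator subgroup, tensored with $\Q$ on its discrete quotient. This is the main obstacle: $G_1$ itself could contain $\Z^k$ factors, so $G_1\times\Z^n\cong G_1\times\Z^m$ is conceivable (for example if $G_1\cong G_1\times\Z$).

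To avoid this, I would use the modular function directly and construct non-unimodular examples $G_t := G_1\times (\Z\ltimes\ldots)$. The cleanest version keeps $G$ and varies the image: take $G^{(k)} := G_1\times_{\Delta}\,$ pulled back along the subgroups $\Delta_G(G)^{1/k}$. Concretely, if $\Delta_G(G)=\langle a_1,\dots\rangle$, pick $g\in\Gamma$ with $\Delta_G(g)=a\neq1$ and form $G\times\R_+$-extensions realizing modular image $a^{\Z/k}$, for instance $G_1\rtimes$ a suitable lift. The modular image $\Delta_{G^{(k)}}(G^{(k)})\subset\R_+$ is a topological-isomorphism invariant, so distinct images give non-isomorphic groups, and Proposition \ref{oep} again gives the OE, because every such action is OE to $\Gamma_1\times\Z\curvearrowright G_1\times\Z$. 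Realizing prescribed modular images with the same unimodular part is the step I expect to require the most care. My first attempt would be $G_1\times(\Q_p\rtimes p^{k\Z})$-type factors, adjusted so that the new unimodular part is still OE-compatible.
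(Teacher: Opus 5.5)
Your proposal has a genuine gap: it never produces a family of groups that is provably pairwise non-isomorphic. For your first family $G_1\times\Z^n$, the orbit equivalence is fine. Non-isomorphism is not, as you observe yourself: nothing in the hypotheses rules out $G_1\cong G_1\times\Z$. For example, if $G_1$ has a discrete direct factor $\bigoplus_{\N}\Z$, then all the $G_n$ are isomorphic.

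Your second approach is only a sketch. Enlarging the modular image to $a^{\Z/k}$ requires a group with unimodular part $G_1$ containing elements of modular value $a^{1/k}$, and you never construct one. The concrete suggestion $G_1\times(\mathbb{Q}_p\rtimes p^{k\Z})$ fails because it changes the unimodular part to $G_1\times\mathbb{Q}_p$. Any countable dense subgroup of $G_1\times\mathbb{Q}_p$ acts on the $\mathbb{Q}_p$-coordinate by a dense subgroup of an abelian group. That action is amenable, ergodic and non-atomic, so it has non-trivial almost invariant sequences $B_n$. Using a product probability measure, the sets $G_1\times B_n$ are then non-trivial almost invariant sequences upstairs. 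Hence these groups admit no strongly ergodic countable dense action at all, and cannot carry an action OE to $\Gamma\curvearrowright G$.

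The missing idea is to shrink the modular image inside $G$ rather than enlarge it:
\begin{itemize}
\item Pick $g\in\Gamma$ with $t:=\Delta_G(g)\neq 1$, and set $G^{(i)}:=\Delta_G^{-1}(t^{i\Z})$ and $\Gamma^{(i)}:=\langle\Gamma_1,g^i\rangle$ for $i\in\N$.
\item Each $G^{(i)}$ is an open subgroup of $G$ containing $G_1$. Hence $\Delta_{G^{(i)}}=\Delta_G|_{G^{(i)}}$, with image $t^{i\Z}$. These images are pairwise distinct and are invariant under topological isomorphism, so the $G^{(i)}$ are pairwise non-isomorphic.
\item $\Gamma^{(i)}$ is dense in $G^{(i)}=\bigsqcup_k G_1g^{ik}$.
\item $\Gamma^{(i)}\cap G_1=\Gamma_1$, since $\Gamma_1\subseteq\Gamma^{(i)}\subseteq\Gamma$.
\item $\Delta_G(\Gamma^{(i)})=\Delta_{G^{(i)}}(G^{(i)})$.
\end{itemize}
So the construction in the proof of Proposition \ref{oep} applies verbatim; it only uses that the kernel is open and that $\Gamma^{(i)}$ meets every fibre of $\Delta$. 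It shows that $\Gamma^{(i)}\curvearrowright G^{(i)}$ is OE to $\Gamma_1\times\Z\curvearrowright G_1\times\Z$, hence to $\Gamma\curvearrowright G$. This is the paper's argument, and no extension problem arises.
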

    
\begin{proof}
    Let $t \in \Delta_G(G) \setminus \ker(\Delta_G)$. Let $g \in \Gamma$ such that $\Delta(g) = t$. For $i \in \N$, let $\Gamma_i$ be the subgroup generated by $g^i$.
    Let $T_i$ be the subgroup of $\R_+$ generated by $t^i$. Let $G_i = G \cap \Delta_G^{-1}(T_i)$. By the previous proposition, for all $i>1$, the countable dense actions $\Gamma_i \curvearrowright G_i$ are all OE to $\Gamma \curvearrowright G$ but $G_i$ is not topologically isomorphic to $G$, since their image under their modular homomorphisms differ.
\end{proof}

Under the assumption of simple connectedness, the ``unimodular part" of the action is still rigid.

\begin{thm}
    Let $G$ be a non-unimodular almost unimodular lcsc group with simply connected unimodular part. Suppose $\Gamma \curvearrowright G$ is a strongly ergodic countable dense action. Let $H$ be an almost unimodular lcsc group with simply connected unimodular part. Let $\Lambda \curvearrowright H$ be a countable, not necessarily dense action.

    Then, $\Gamma \curvearrowright G$ is stably orbit equivalent to $\Lambda \curvearrowright H$ if and only if there exists a topological isomorphism $\delta : \ker\Delta_G \to  \ker\Delta_H$ satisfying $\delta\big(\Gamma \cap \ker\Delta_G\big) = \Lambda \cap \ker\Delta_H$.
\end{thm}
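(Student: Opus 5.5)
Throughout, write $G_1=\ker\Delta_G$, $\Gamma_1=\Gamma\cap G_1$, $H_1=\ker\Delta_H$ and $\Lambda_1=\Lambda\cap H_1$. The strategy is to reduce both directions to the unimodular parts and then invoke Ioana's rigidity theorem \cite[Theorem A]{Io17}.

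\emph{Reduction for $G$.} By the preceding theorem, strong ergodicity of $\Gamma\curvearrowright G$ gives strong ergodicity of $\Gamma_1\curvearrowright G_1$. Restricting the orbit equivalence relation of $\Gamma\curvearrowright G$ to the non-null set $G_1=\Delta_G^{-1}(\{1\})$ gives exactly the relation of $\Gamma_1\curvearrowright G_1$. Indeed, if $x,\gamma x\in G_1$ then $\Delta_G(\gamma)=1$, so $\gamma\in\Gamma_1$. Hence $\Gamma\curvearrowright G$ is SOE to $\Gamma_1\curvearrowright G_1$.

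\emph{Reduction for $H$.} The same restriction argument gives that $\Lambda\curvearrowright H$, restricted to $H_1$, is $\Lambda_1\curvearrowright H_1$. Since $\Delta_H(H)$ is countable, $H_1$ is non-null, so the two actions are SOE. Two technical points need care here.
\begin{enumerate}[label=(\alph*)]
\item $\Lambda$ is not assumed dense. In the forward direction, SOE with a strongly ergodic, hence ergodic, relation forces $\Lambda\curvearrowright H$ to be ergodic. I will argue this forces $\Lambda$ to be dense (a non-dense $\Lambda$ has closure $\overline\Lambda\neq H$, and the $\overline\Lambda$-invariant sets give non-trivial invariant sets), and that $\Lambda_1$ is dense in $H_1$.
\item $\Lambda_1\curvearrowright H_1$ must be ergodic, so that it sees the same stable class. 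For this I will use that $\Lambda$ meets every coset $\Delta_H^{-1}(t)$, by density and openness of $H_1$.
\end{enumerate}

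\emph{Forward direction.} Assume $\Gamma\curvearrowright G$ is SOE to $\Lambda\curvearrowright H$. Then $\Gamma_1\curvearrowright G_1$ is SOE to $\Lambda_1\curvearrowright H_1$, where:
\begin{itemize}
\item $G_1$ and $H_1$ are simply connected, and unimodular;
\item the action $\Gamma_1\curvearrowright G_1$ is strongly ergodic.
\end{itemize}
Ioana's theorem \cite[Theorem A]{Io17} (in its stable orbit equivalence form) then yields a topological isomorphism $\delta:G_1\to H_1$ with $\delta(\Gamma_1)=\Lambda_1$, possibly after first verifying that $\Lambda_1$ is dense in $H_1$ as in (a).

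\emph{Converse.} Given such a $\delta$, it is a Borel isomorphism carrying Haar measure to a multiple of Haar measure and $\Gamma_1$-orbits to $\Lambda_1$-orbits. So $\Gamma_1\curvearrowright G_1$ is OE to $\Lambda_1\curvearrowright H_1$. Chaining this with the two SOEs from the reduction steps gives the SOE of $\Gamma\curvearrowright G$ with $\Lambda\curvearrowright H$, since SOE is an equivalence relation for ergodic relations.

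\emph{Main obstacle.} The delicate step is matching the hypotheses of \cite[Theorem A]{Io17}: it is stated for orbit equivalence of dense actions on simply connected groups. I will first check whether Ioana's statement already covers SOE with an arbitrary countable action on the target side. If it does not, I will handle non-density of $\Lambda$ via ergodicity, as in (a). I will pass from SOE to OE using that the restriction of a strongly ergodic relation to a subset is a compression of it, and that translation actions on non-compact unimodular groups are infinite measure preserving, so any two non-null subsets give isomorphic restrictions up to amplification.
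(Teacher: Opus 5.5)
Your overall strategy matches the paper's: reduce to an SOE between $\Gamma_1\curvearrowright G_1$ and $\Lambda_1\curvearrowright H_1$, then apply Ioana's Theorem A. Restricting directly to $\ker\Delta$ is more robust than the paper's appeal to Proposition~\ref{oep} for $\Lambda\curvearrowright H$. That action is not known a priori to be strongly ergodic, and $H$ may even be unimodular.

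\textbf{The gap is in step (a), and your forward direction rests on it.} Under the paper's definition, SOE only asks for non-null sets $E,F$; this is the reading under which ``not necessarily dense'' makes sense. With that definition, SOE with an ergodic relation does not force $\Lambda\curvearrowright H$ to be ergodic, nor $\Lambda$ to be dense. Take $H=G$ and $\Lambda=\Gamma_1$. The identity of $G_1$ is then an SOE with $\Gamma\curvearrowright G$, yet the cosets of $G_1$ are non-trivial invariant sets. For the same reason, your transitivity step through $\Lambda\curvearrowright H$ is unjustified: $F$ may lie in a coset $hH_1$ that $\Lambda$ never connects to $H_1$. In the same example, use $x\mapsto xg$ from $G_1$ onto $gG_1$ with $g\notin G_1$.

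\textbf{Repair.} Use the slicing the paper performs with $E_t,F_s$.
\begin{enumerate}
\item Since $G/G_1$ and $H/H_1$ are countable, choose $g,h$ such that $E'=E\cap gG_1\cap\theta^{-1}(hH_1)$ is non-null.
\item On a single coset, the relation is just the translation relation of $\Gamma_1$ (resp.\ $\Lambda_1$). Right translation by $g^{-1}$ (resp.\ $h^{-1}$) conjugates it to the one on $G_1$ (resp.\ $H_1$).
\item Hence $x\mapsto\theta(xg)h^{-1}$ is an SOE between $\Gamma_1\curvearrowright G_1$ and $\Lambda_1\curvearrowright H_1$.
\end{enumerate}
Density of $\Lambda_1$ in $H_1$ must then come from connectedness of $H_1$, not from density of $\Lambda$. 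Suppose $K=\overline{\Lambda_1}\neq H_1$. Then $K$ is not open (since $H_1$ is connected), hence null, so its cosets are null. Pulling back Borel sets from $K\backslash H_1$ produces a non-trivial invariant subset of every non-null restriction of $\Lambda_1\curvearrowright H_1$. This contradicts ergodicity of the restrictions of the countable dense action $\Gamma_1\curvearrowright G_1$. Ioana's Theorem A is already an SOE statement, so your closing amplification discussion is unnecessary.

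\textbf{Converse.} Do not appeal to transitivity of SOE for ergodic relations, because $\Lambda\curvearrowright H$ need not be ergodic there (e.g.\ $\Lambda\subset H_1$). Instead observe that $\delta$ itself is an SOE with $E=G_1$ and $F=H_1$, since $\Gamma x\cap G_1=\Gamma_1x$ and $\Lambda y\cap H_1=\Lambda_1y$. Under the stricter convention where SOE presupposes ergodic actions, your (a) would hold. But then the converse as stated would fail for such $\Lambda$. So the loose convention is the one to use throughout.
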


\begin{proof}
    Let $\Gamma_1 = \Gamma \cap \ker\Delta_G$. Similarly denote by $\Lambda_1, G_1, H_1$ their intersections with the kernels of their corresponding modular homomorphisms $\Delta_G$ or $\Delta_H$. By Proposition \ref{oep}, the countable dense actions $\Gamma \curvearrowright G$ and $\Lambda \curvearrowright H$ are OE to $\Gamma_1 \times \Z \curvearrowright G_1 \times \Z$ and $\Lambda_1 \times \Z \curvearrowright H_1 \times \Z$ respectively. 
    
    Suppose $\Gamma \curvearrowright G$ and $\Lambda \curvearrowright H$ are SOE. Then, $\Gamma_1 \times \Z \curvearrowright G_1 \times \Z$ and $\Lambda_1 \times \Z \curvearrowright H_1 \times \Z$ are SOE with a map $\theta : E \to F$. Therefore, there exists $t,s \in \R_+$ such that $E_t := E \cap (G_1 \times \{t\})$ is non-null and $\theta(E_t) = F_s$ for some set satisfying $F_s = F_s \cap (H_1 \times \{s\})$. Notice $\theta$ implements SOE between $\Gamma_1 \curvearrowright G_1$ and $\Lambda_1 \curvearrowright H_1$. Since $G_1$ and $H_1$ are connected, an application of \cite[Theorem A]{Io17} completes the proof.
\end{proof}

\section{von Neumann Algebras}

Using translation actions on a non almost unimodular group, one can make non-full non-injective von Neumann algebras of types II$_\infty$ and III$_\lambda$ for $\lambda \in (0,1]$. See \cite{FMII} for the construction of a von Neumann algebra $L(\mathcal{R})$ from an equivalence relation $\mathcal{R}$.

\begin{lem}
    Any non-amenable countable dense subgroup $\Gamma$ of a connected  lcsc group $G$ induces a non-amenable left translation action.
\end{lem}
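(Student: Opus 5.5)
Suppose, toward a contradiction, that the equivalence relation $\mathcal{R}_\Gamma$ of the left translation action $\Gamma \curvearrowright (G,\mu_G)$ is amenable (hyperfinite, by Connes--Feldman--Weiss). The argument will use the left--right actions already employed in the proof of Theorem~\ref{strergau}, together with the standard fact that an amenable equivalence relation which is a subequivalence relation of, or stabilized by, a free measure preserving action of a group $\Gamma$ forces $\Gamma$ to be amenable when stabilizers are amenable. The most direct route I would take is the following.

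The first step is to pass to the right action. The right translation action of $\Gamma$ on $G$, given by $x \mapsto x\gamma^{-1}$, commutes with the left action. It is conjugate to the left action via the inversion $x \mapsto x^{-1}$, which is a non-singular Borel isomorphism, so it is also amenable. I would then consider the product relation generated by both actions, namely the relation $\overline{\mathcal{R}}$ from Theorem~\ref{strergau} with $\Lambda = \Gamma$. The key identity is that the diagonal conjugation $x \mapsto \gamma x \gamma^{-1}$ is contained in $\overline{\mathcal{R}}$.

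Rather than analyzing $\overline{\mathcal{R}}$ directly, I would use the cleaner standard argument. $\mathcal{R}_\Gamma$ is generated by a free action, and it is free since $\gamma x = x$ forces $\gamma = e$. Zimmer's theorem states that a free non-singular action of a countable group whose orbit equivalence relation is amenable is an amenable action in Zimmer's sense. An amenable action of $\Gamma$ that admits an invariant mean-like structure forces $\Gamma$ to be amenable. Concretely, if $\Gamma \curvearrowright X$ is Zimmer-amenable and preserves a \emph{finite} invariant measure, then $\Gamma$ is amenable. Here $m_G$ is invariant but possibly infinite, so this does not apply directly. This is the main obstacle, and it is where connectedness enters.

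To handle the infinite measure, I would use connectedness as follows. A connected lcsc group is unimodular modulo its amenable radical: by the Gleason--Montgomery--Zippin/Iwasawa structure theory, $G$ has a compact normal subgroup $K$ with $G/K$ a Lie group, and the amenable radical $R$ is closed and normal with $G/R$ a semisimple Lie group with trivial center and no compact factors. Suppose $\Gamma R/R$ is dense in $G/R$ and the image of $\Gamma$ were amenable. Then, since $R$ is amenable and $\Gamma\cap R$ is amenable as a subgroup of an amenable group, $\Gamma$ itself would be amenable. We may therefore assume $G/R$ is nontrivial. The factor map $G \to G/R$ is $\Gamma$-equivariant, and factors of amenable actions onto spaces with amenable fibers behave well. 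I would then show that the action $\Gamma \curvearrowright G/R$ is amenable, and hence, by Zimmer's characterization, that the action $\Gamma \curvearrowright G$ being amenable is equivalent to $G$ being amenable relative to $\Gamma$. More precisely, I would invoke the known fact that for a closed subgroup action, the translation action $\Gamma \curvearrowright G$ is Zimmer-amenable if and only if the closed group $G$ is amenable. This holds because the action $\Gamma \curvearrowright G$ is induced, and amenability of $\Gamma \curvearrowright G/H$ for closed $H$ is equivalent to amenability of $H$; applying this with the roles of $G$ and $\Gamma$ swapped via Mackey range duality gives that $\Gamma \curvearrowright G$ is amenable if and only if $G \curvearrowright G/\Gamma$ is amenable, which for dense $\Gamma$ is the trivial space. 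Thus amenability of the action is equivalent to amenability of $G$.

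Finally, I would show that $G$ being amenable and connected forces every countable subgroup to be amenable. For connected lcsc groups, amenability means $G/R$ is compact, and a dense subgroup of a compact connected Lie group can be non-amenable (for example, free subgroups of $SO(3)$). So this step fails as stated, and I would instead need the finite-measure argument. I would therefore reorganize the proof into two cases. If $G$ is non-amenable, the duality argument above gives that the action is non-amenable. If $G$ is amenable and connected, then $G/R$ is compact. I would then expect to use a finite invariant measure on a quotient $G/N$, with $N$ amenable normal, on which $\Gamma$ acts freely modulo an amenable kernel. Since a Zimmer-amenable action with a finite invariant measure forces the acting group to be amenable, this yields the contradiction. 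The hardest point is producing this finite-measure quotient and controlling the kernel $\Gamma\cap N$, which is amenable.
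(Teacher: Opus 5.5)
There is a genuine gap: neither of your two final cases is actually proved. The step meant to handle non-amenable $G$ is invalid. Zimmer's duality ($H\curvearrowright G/L$ is amenable iff $L\curvearrowright H\backslash G$ is amenable) is stated for \emph{closed} subgroups $H,L$. For a countable dense $\Gamma$ the quotient $G/\Gamma$ is not a standard Borel space, so the duality does not apply. Even formally, the dual of $\Gamma\curvearrowright G=G/\{e\}$ is the trivial group acting on $\Gamma\backslash G$. By contrast, $G\curvearrowright G/\Gamma$ is dual to $\Gamma$ acting on a point, i.e.\ to amenability of $\Gamma$ itself.

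The equivalence you derive, ``$\Gamma\curvearrowright G$ amenable iff $G$ amenable'', is false. A dense free subgroup of $SO(3)$ acts freely and measure-preservingly on the amenable group $SO(3)$, hence non-amenably. You noticed this example but blamed a different step, and kept the duality for the non-amenable case, which is therefore unsupported. A related recurring slip is ``$\Gamma\cap R$ is amenable as a subgroup of an amenable group'', and later ``$\Gamma\cap N$ is amenable''. This confuses closed subgroups with countable subgroups carrying the discrete topology. For $G=SO(3)\times SL_2(\R)$ the amenable radical contains $SO(3)\times\{1\}$, and $\Gamma\cap R$ can contain a dense free subgroup of it.

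Your amenable case is explicitly left open, and the proposed repair runs into exactly the infinite-measure obstacle you identified. Grant a normal $N$ with $G/N$ compact and $\Gamma\cap N$ amenable. You then only get non-amenability of the factor action $\Gamma\curvearrowright G/N$. Non-amenability does not lift from a factor to an extension whose fibres carry infinite measure, because amenability passes to extensions, not to factors. For example, $\Gamma\curvearrowright\Gamma\times Y$ acting diagonally is smooth, hence amenable, whatever $\Gamma\curvearrowright Y$ is.

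That case can be closed differently. The Koopman representation of $\Gamma\curvearrowright(G,m_G)$ is $\lambda_G|_\Gamma$, which weakly contains the trivial representation when $G$ is amenable. By a theorem of Kuhn, the Koopman representation of an amenable action is weakly contained in $\lambda_\Gamma$. Together these would force $\Gamma$ to be amenable.

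The case of non-amenable $G$ is the substantive one, and it is where a Tits-alternative-type input enters. The paper does not argue any of this by hand. It invokes Breuillard--Gelander's characterization (Definition 8.3 and Theorem 8.4 of \cite{BG07}) of when the translation action of a countable subgroup is amenable, a criterion phrased in terms of open subgroups of $G$. A connected group has no proper open subgroup, so a non-amenable $\Gamma$ fails the criterion, and the action is non-amenable.
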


\begin{proof}
    The only open subgroup of a connected lcsc group is itself. Therefore $\Gamma$ doesn't satisfy Definition 8.3 of \cite{BG07}, and Theorem 8.4 from the same article shows that the countable dense action $\Gamma \curvearrowright G$ is non-amenable.
\end{proof}

\begin{ex}
    Consider $G = GL_n(\R)^+ \rtimes \R^n$ for some $n\geq2$, the group of orientation preserving affine transformations on $\R^n$. $G$ is connected. Let $\Gamma$ be a countable dense subgroup of $G$ containing $SL_n(\Z)$. Since $SL_2(\Z)$ (and hence $\Gamma$) contains the free group on two generators, $\Gamma$ is not amenable. By the previous lemma and Theorem \ref{strergau}, the countable dense action $\Gamma \curvearrowright G$ is neither amenable nor strongly ergodic. Moreover, $m_G$ indudces an infinite trace. Hence, the von Neumann algebra $L(\Gamma \curvearrowright G)$ is a non-injective non-full factor of type II$_\infty$. Given $t \in \R_+$,  there exists $g \in G$ with $\Delta_G(g) = t$. Let $U : L^2(\mathcal{R}) \to L^2(\mathcal{R})$ be defined as $U(f)(x,y) = U(f)(g^{-1}x,g^{-1}y)$. The action $\alpha_g: x \mapsto UxU^*$ is an infinite trace scaling action with scaling factor $t$. We therefore obtain that the fundamental group of $L(\Gamma \curvearrowright G)$ is $\R^+$. 
\end{ex}

\begin{ex}
    Using the setup of the previous example, let $\Lambda$ be any countable subgroup of $G$. Consider the left-right equivalence relation $\mathcal{R}$ given by $x \sim \gamma x \lambda$ for $\gamma \in \Gamma$ and $\lambda \in \Lambda$. Since $\Gamma \curvearrowright \Lambda$ is a countable dense action, $\ker(\delta_{m_G})$ is ergodic. Therefore by the proof of \cite[Lemma 5.6]{HMV19}, the asymptotic ratio set of $\mathcal{R},m_G$ is the essential range of the Radon-Nikodym cocycle $\delta_{m_G}$, which in this case is $\overline{\Delta_G(\Lambda)}$. By \cite[Theorem XIII.2.13]{Ta3}, $\Delta_G(\Lambda)$ is equal to the modular spectrum $S$ of $L(\mathcal{R})$. This means with an appropriate choice of $\Lambda$, one can obtain a von Neumann algebra $M$ of all subtypes III$_\lambda$ for $\lambda \in (0,1]$. $M$ contains $L(\Gamma \curvearrowright G)$ with expectation and hence is non-injective. Moreover by Theorem \ref{strergau}, the von Neumann algebra is not full.
\end{ex}

The Sd invariant for an equivalence relation was defined by Houdayer-Marrakchi-Verraedt in \cite{HMV19}. One can construct equivalence relations with prescribed Sd($\mathcal{R}$) using strongly ergodic actions on almost unimodular groups.

\begin{ex}
    Let $T$ be any countable subgroup of $\R_+$. For $n\geq 3$, let $H^n_T$ be the group of linear transformations of $\R^n$ generated by $SO(n)$ and $T1$. Let $G^n_T = H^n_T \ltimes \R^n$. Then, $G^n_T$ is an almost unimodular group with $\Delta_{G^n_T}(G^n_T) = T$. Let $\Gamma \curvearrowright SO(n) \ltimes R^n$ be any strongly ergodic countable dense action, which exists by \cite{BI20}. Let $\Lambda = T1 \vee \Gamma$. Consider the equivalence relation $\mathcal{R}$ on $G^n_T$ defined by $x \sim \lambda_1x\lambda_2, \lambda_i \in \Lambda$. By Theorem 1.3, $\mathcal{R}$ is strongly ergodic. Moreover, $\ker(\delta_{m_G})$ is strongly ergodic, hence Sd$(\mathcal{R})= \delta_{m_G}(\mathcal{R}) = T$.
\end{ex}

We conclude with a corollary of Proposition \ref{oep} for the von Neumann algebras generated by $\Gamma \curvearrowright G$ and $(\Gamma \cap \ker(\Delta_G)) \curvearrowright \ker(\Delta_G)$.

\begin{cor}
    For a countable dense action $\Gamma \curvearrowright G$, we have 
    \begin{equation*}
        L(\Gamma \curvearrowright G) \cong  L\big((\Gamma \cap \ker\Delta_G) \curvearrowright \ker\Delta_G\big)
    \end{equation*}
\end{cor}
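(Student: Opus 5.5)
The plan is to transport the problem to a product equivalence relation using the orbit equivalence from Proposition \ref{oep}, and then absorb the discrete factor into the unimodular part. Throughout I assume, as that proposition implicitly requires, that $G$ is almost unimodular: then $D := \Delta_G(G)$ is a countable subgroup of $\R_+$, and $\Gamma_1 := \Gamma \cap \ker\Delta_G$ is dense in the open subgroup $G_1 := \ker\Delta_G$. If $G$ is unimodular there is nothing to prove, so assume $D$ is nontrivial, hence countably infinite.

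\emph{Step 1: reduce to a product relation.} The map $\theta(x) = (\gamma_t^{-1}x, t)$ for $\Delta_G(x)=t$, constructed in the proof of Proposition \ref{oep}, only uses the choice of $\gamma_t \in \Gamma \cap \Delta_G^{-1}(t)$. It does not use strong ergodicity, so it is valid for every countable dense action on an almost unimodular group. It gives an orbit equivalence between $\Gamma \curvearrowright G$ and $(\Gamma_1 \times D) \curvearrowright G_1 \times D$. The latter equivalence relation is $\mathcal{R}_1 \times \mathcal{T}_D$, where $\mathcal{R}_1$ is the orbit relation of $\Gamma_1 \curvearrowright G_1$ and $\mathcal{T}_D$ is the transitive relation on the countable set $D$. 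The construction $\mathcal{R} \mapsto L(\mathcal{R})$ of \cite{FMII} is invariant under orbit equivalence. It is also insensitive to replacing $\mu_G$ by $m_G$, since these measures are equivalent. Hence
\begin{equation*}
L(\Gamma \curvearrowright G) \cong L(\mathcal{R}_1 \times \mathcal{T}_D) \cong L(\mathcal{R}_1) \,\bar\otimes\, L(\mathcal{T}_D) \cong L(\mathcal{R}_1) \,\bar\otimes\, B(\ell^2(D)).
\end{equation*}
The middle isomorphism is the standard identification $L^2(\mathcal{R}_1 \times \mathcal{T}_D) \cong L^2(\mathcal{R}_1) \otimes \ell^2(D \times D)$, which intertwines the generators.

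\emph{Step 2: absorb $B(\ell^2(D))$.} The action $\Gamma_1 \curvearrowright G_1$ is a countable dense action, so it is ergodic by the argument given at the start of Section 1. It preserves the Haar measure $m_{G_1}$, which is the restriction of $m_G$ to the open subgroup $G_1$. Therefore $L(\mathcal{R}_1)$ is a semifinite factor, with trace induced by $m_{G_1}$. If $m_{G_1}(G_1) = \infty$, i.e.\ $G_1$ is non-compact, then $L(\mathcal{R}_1)$ is a II$_\infty$ factor, since it is diffuse because $G_1$ is non-discrete. For any II$_\infty$ factor $M$ one has $M \cong N \bar\otimes B(\ell^2)$ with $N$ of type II$_1$. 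Since $\ell^2 \otimes \ell^2 \cong \ell^2$, this gives $M \bar\otimes B(\ell^2(D)) \cong M$, which yields the claimed isomorphism.

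\emph{Main obstacle.} The delicate case is when $G_1$ is compact; in that case I expect the statement to fail as written rather than merely be hard to prove. Then $L(\mathcal{R}_1)$ is of type II$_1$, while $L(\Gamma \curvearrowright G)$ is of type II$_\infty$ because $D$ is infinite, so the two algebras cannot be isomorphic. The corollary can hold only up to amplification, $L(\Gamma \curvearrowright G) \cong L(\mathcal{R}_1)^{\infty}$, and the proof should record this as an explicit hypothesis or case. A secondary point to check is that the factor $\mathcal{T}_D$ is compatible with the measures used in \cite{FMII}. This holds because $\theta$ pushes $m_G$ to $m_{G_1} \times$ counting measure up to the bounded density $t \mapsto \Delta_G(\gamma_t)$ on each sheet $G_1 \times \{t\}$, and such densities do not affect the isomorphism class of $L(\mathcal{R})$.
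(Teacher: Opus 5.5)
\textbf{Review of the proposal.} You take the same route as the paper. The orbit equivalence $\theta$ of Proposition~\ref{oep} turns $\Gamma \curvearrowright G$ into $\mathcal{R}_1 \times \mathcal{T}_D$. Hence $L(\Gamma \curvearrowright G) \cong L(\mathcal{R}_1)\bar\otimes B(\ell^2(D))$, and $B(\ell^2(D))$ is absorbed because $L(\mathcal{R}_1)$ is a II$_\infty$ factor. Your remark that $\theta$ needs only almost unimodularity, not strong ergodicity, is correct; the paper passes over this point.

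\textbf{The gap: the compact case is empty.} In your ``main obstacle'' you leave open the case where $G_1=\ker\Delta_G$ is compact, and you assert the corollary fails there. That case cannot occur once $G$ is non-unimodular, and ruling it out is exactly what the paper's proof supplies. For $g\in G$, conjugation $x\mapsto gxg^{-1}$ restricts to a topological automorphism of the normal open subgroup $G_1$. Left invariance together with $m_G(Eh)=\Delta_G(h)\,m_G(E)$ gives $m_G(gEg^{-1})=\Delta_G(g)^{-1}m_G(E)$. Taking $E=G_1$ yields $m_G(G_1)=\Delta_G(g)^{-1}m_G(G_1)$. We have $m_G(G_1)>0$ because $G_1$ is open, and some $g$ has $\Delta_G(g)\neq 1$; together these force $m_G(G_1)=\infty$. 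So $G_1$ is never compact in the non-unimodular case, and $L(\mathcal{R}_1)$ is always a II$_\infty$ factor. Consequently no amplification caveat is needed. As written, your Step~2 covers only the non-compact case and then claims a counterexample in the remaining case. That claim is false, and the missing one-line observation above is what completes the proof.

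\textbf{A minor inaccuracy.} On each sheet $\Delta_G^{-1}(t)=\gamma_tG_1$, the map $\theta$ is the left translation $x\mapsto\gamma_t^{-1}x$. It therefore carries $m_G$ exactly onto $m_{G_1}\times$ counting measure. There is no density $t\mapsto\Delta_G(\gamma_t)$; that function equals $t$ and is unbounded on the infinite group $D$ in any case. This does not affect the argument, since $L(\mathcal{R})$ depends only on the measure class.
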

\begin{proof}
    The statement is trivial when $G$ is unimodular. Suppose not. Then, by Proposition \ref{oep} we have
    \begin{align*}
        L(\Gamma \curvearrowright G) &\cong L\big((\Gamma \cap \ker\Delta_G) \curvearrowright \ker\Delta_G) \otimes L(\Z \curvearrowright \Z) \\
        &\cong L\big((\Gamma \cap \ker\Delta_G) \curvearrowright \ker\Delta_G\big) \otimes B(\l^2(\Z)) \\
        &\cong L\big((\Gamma \cap \ker\Delta_G) \curvearrowright \ker\Delta_G\big)
    \end{align*}
    The last isomorphism follows from $L\big((\Gamma \cap \ker\Delta_G) \curvearrowright \ker\Delta_G \big)$ being a II$_\infty$ factor. Indeed, $\ker\Delta_G$ admits a Haar measure scaling action and hence is non-compact. So ${\mu_G}_{|\ker \Delta_G}$ induces an infinite trace.
\end{proof}


\bibliographystyle{amsalpha}
\bibliography{references}

\end{document}